\documentclass[cleveref,a4paper,UKenglish]{lipics-v2021}
\bibliographystyle{plainurl}
\sloppy
\usepackage{cite} 
\usepackage{amsmath,amsthm}
\usepackage{amssymb}  
\usepackage{xspace}
\usepackage{adjustbox,standalone}

\usepackage{tikz}
\tikzstyle{every picture} = [>=latex]
\usetikzlibrary{calc,arrows,decorations.markings,quotes}
\usetikzlibrary{backgrounds,shapes,arrows}

 \usepackage[appendix=inline]{apxproof}
\ifx\proof\inlineproof
  \def\apxmark{}
\else
  \def\apxmark{\,*$\!$}
\fi
\newtheoremrep{propositionx2}[theorem]{Proposition}
\newtheoremrep{proposition2}[theorem]{Proposition\apxmark}
\newtheoremrep{theorem2}[theorem]{Theorem\apxmark}
\newtheoremrep{lemma2}[theorem]{Lemma\apxmark}
\newtheoremrep{claim2}[theorem]{Claim\apxmark}


\def\ca#1{{\cal#1}}

\def\sn{\mathop{\text{sn}}}
\def\qn{\mathop{\text{qn}}}

\hideLIPIcs
\nolinenumbers

\title{Stack and Queue Numbers of Graphs Revisited}

\author{Petr Hlin{\v e}n\'y}{Masaryk University, Brno,
  Czech republic}{hlineny@fi.muni.cz}{https://orcid.org/0000-0003-2125-1514}{}
\author{Adam Straka}{Masaryk University, Brno,
  Czech republic}{493034@mail.muni.cz}{}{}
  
\authorrunning{P.\ Hlin\v{e}n\'y and A.~Straka}
\Copyright{Petr Hlin\v{e}n\'y}

\keywords{stack number, queue number, Cartesian product}


\ccsdesc[500]{Mathematics of computing~Graph theory}

\begin{document}
\maketitle

\begin{abstract}
A long-standing question of the mutual relation between the stack and queue numbers of a graph,
explicitly emphasized by Dujmovi\'c and Wood in 2005, was partially answered
by Dujmovi\'c, Eppstein, Hickingbotham, Morin and Wood in 2022;
they proved the existence of a graph family with the queue number at most $4$ but unbounded stack number.
We give an alternative very short, and still elementary, proof of the same fact.
\end{abstract}

\section{Introduction}

The graph invariants stack number and queue number are related to the concept of graph linearization.  
It consists of ``stretching a graph onto a line' (i.e., giving a linear order to the vertex set) and processing the graph (including its edges) in this order.
During this processing, the edges can be thought as stored in various standard data structures, such as stacks or queues.  
More precisely, when the processing encounters an end vertex of an edge $f$ for the first time,
it pushes $f$ onto a chosen stack (or queue), and once it reaches the other end vertex of $f$, it pops $f$ from its stack (or queue).
Of course, the key property of the stack (``last in, first out'') or of the queue (``first in, first out'') can never be broken,
and this restriction typically forces us to use more than one of the considered data structures to store different edges of our graph.
The goal is to find a vertex order of the graph and a distribution of its edges which minimize the number of stacks (or queues) used.


To determine which of the two data structures is better, we would like to know whether the stack number is bounded by the queue number or vice versa.  
If the queue number is bounded by the stack number and the stack number is not bounded by the queue number,
then queues are strictly better than stacks.  
Stacks would be better in the opposite situation.
The question of mutual bounds between the stack and queue numbers was emphasized by Dujmovi\'c and Wood in~\cite{dujmovic2005stacks}.

We give the formal definitions first.
Consider a graph $G$ and a strict linear order $\prec$ on its vertex set $V(G)$.
Two edges $xx',yy'\in E(G)$ with $x\prec x'$ and $y\prec y'$ are said to {\em$\prec$-cross} if $x\prec y\prec x'\prec y'$ or $y\prec x\prec y'\prec x'$,
and to  {\em$\prec$-nest} if $x\prec y\prec y'\prec x'$ or $y\prec x\prec x'\prec y'$.
See \Cref{fig:crossnest}. A set of $k$ pairwise crossing (nested) edges is called a {\em$k$-twist} (a {\em$k$-rainbow}). 
The {\em stack number} $\sn(G)$ ({\em queue number} $\qn(G)$) of a graph $G$ is the minimum integer $k$ such that there exist a linear order $\prec$ of $V(G)$
and a colouring of the edges of $G$ by $k$ colours such that no two edges of the same colour $\prec$-cross ($\prec$-nest, resp.). Having no two edges of the same colour crossed (nested) is equivalent to not breaking the stack (queue) property while processing the edges.
The corresponding order $\prec$ together with the colouring is called a {\em$k$-stack ($k$-queue, resp.) layout} of~$G$.

\begin{figure}[ht]
\centering
~\raise2ex\hbox{a)~}
\begin{tikzpicture}
\tikzstyle{every node}=[draw, shape=circle, minimum size=3pt,inner sep=0pt, fill=black]
\draw[dotted] (0,0)--(5,0);
\node[label=below:$\prec:$,draw=none,fill=none] at (0,0) (0) {};
\node[label=below:$~x\,~$] at (1,0) (x) {}; \node[label=below:$x'$] at (3,0) (xx) {};
\node[label=below:$~\,y\,~$] at (2,0) (y) {}; \node[label=below:$y'$] at (4,0) (yy) {};
\draw (x) to[bend left] (xx) (y) to[bend left] (yy);
\end{tikzpicture}
\qquad
\raise2ex\hbox{b)~}
\begin{tikzpicture}
\tikzstyle{every node}=[draw, shape=circle, minimum size=3pt,inner sep=0pt, fill=black]
\draw[dotted] (0,0)--(5,0);
\node[label=below:$\prec:$,draw=none,fill=none] at (0,0) (0) {};
\node[label=below:$~x\,~$] at (1,0) (x) {}; \node[label=below:$x'$] at (4,0) (xx) {};
\node[label=below:$~\,y\,~$] at (2,0) (y) {}; \node[label=below:$y'$] at (3,0) (yy) {};
\draw (x) to[bend left] (xx) (y) to[bend left] (yy);
\end{tikzpicture}
\caption{Edges $xx'$ and $yy'$ that (a) $\prec$-cross, and (b) $\prec$-nest.}
\label{fig:crossnest}
\end{figure}
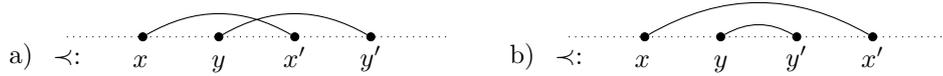

\vspace*{-2ex}\subparagraph{Stack Number.}
The stack number was originally defined in the sixties as the book thickness (or page number) by Atneosen and Persinger 
in \cite{atneosen1968embeddability,persinger1966subsets}.  
The book embedding problem consists of laying out the vertices of a graph onto a line (the back of the book) and drawing its
edges onto half-planes (pages) incident to this line, such that no two edges cross.  
In the original definition, it was allowed to draw edges on the back of the book itself,
which allows to embed every linear forest in $0$ pages.
Aside of this possible degeneracy in the case of linear forests, the book thickness of a graph is equal to its stack number.

Stack number has been extensively researched and has found many applications.  
Some of them include creating integrated circuits via VLSI design \cite{vlsi}, direct interconnection networks~\cite{KAPOOR2002267} or
fault-tolerant processor arrays~\cite{rosenberg1983diogenes}.

It is $\mathbf{NP}$-hard to determine whether the stack number of a graphs is $k$ for every fixed $k \geq 4$, even for a fixed ordering \cite{vlsi}.
If we layout the vertices in this order on a circle, we can see that the problem becomes the same as deciding $k$-colourability of 
a circle graph (which is the intersection graph of chords of a circle).  
Interestingly, $3$-colourability of circle graphs has recently been re-declared open in~\cite{DBLP:journals/corr/abs-2309-02258}.
The $1$-stack graphs are known to be exactly outerplanar graphs and $2$-stack graphs are subgraphs of planar Hamiltonian graphs \cite{bernhart1979book}.  
There are many other graph classes known with bounded stack number, such as trees, $X$-trees or planar~graphs~\cite{vlsi, yannakakis1986four}.

\vspace*{-2ex}\subparagraph{Queue Number.}
The queue number, on the other hand, was defined in 1992 by Heath and Rosenberg in \cite{laying_queues}.  
Its applications include fault tolerant processor arrays \cite{rosenberg1983diogenes} and permuting object using queues \cite{tarjan1972sorting}.
It is \textbf{NP}-hard to determine the queue number of a graph \cite{laying_queues}, even for the solution value~$1$
(the $1$-queue graphs are exactly arched leveled planar graphs \cite{laying_queues}).
However we can find the queue number of a graph in polynomial time for a fixed ordering, unlike for the stack number; the queue number
of a given ordering is equal to the size of the largest rainbow in the layout \cite{laying_queues}.  
Other classes of bounded queue number include trees, $X$-trees, unicyclic graphs or planar graphs \cite{laying_queues, dujmovic2020planar}.

\vspace*{-2ex}\subparagraph{Mutual relations.}
There are several known results concerning relations between the stack and queue numbers. 
Every $1$-stack graph accepts a $2$-queue layout \cite{comparing}, and every $1$-queue graph accepts a $2$-stack layout \cite{laying_queues}. 
$2$-stack graphs are planar and therefore have bounded queue number as well~\cite{dujmovic2020planar}. 
It is unknown whether $3$-stack graphs have bounded queue number, but this question is already equivalent to the question whether the queue number 
is bounded by the stack number \cite{dujmovic2005stacks}. 
More specifically, the queue number is bounded by the stack number if and only if the bipartite $3$-stack graphs have bounded queue number. 
However, as every graph has a $3$-stack subdivision \cite{blankenship1999drawing}, and we can create such a subdivision
with odd numbers of dividing vertices, thus creating a bipartite graph,
bounding the queue number of bipartite $3$-stack graphs will likely not be a simple task.  

In the other direction, quite recently in 2022, Dujmovi\'c, Eppstein,
Hickingbotham, Morin and Wood \cite{dujmovic2021stack} constructed a class of graphs with the queue number at most $4$ and unbounded stack number.  
This shows that the stack number is not bounded by the queue number and therefore stacks are {\em not more powerful} than queues.

To state the main result of \cite{dujmovic2021stack}, we define the following special graph $H_n$:
the vertex set is $V(H_n)=\{1,\ldots,n\}^2$, and $uv\in E(H_n)$ where $u=[a,b]\in V(H_n)$ and $v=[c,d]\in V(H_n)$, if and only if
$|a-c|+|b-d|=1$ or $a-c=b-d\in\{-1,1\}$.
Note that $H_n$ is the plane dual of the hexagonal (``honeycomb'') grid, and see an illustration in \Cref{fig:SnHn}.

\begin{figure}[ht]
    \centering
~\raise2ex\hbox{a)~}
\begin{tikzpicture}[scale=0.6, every node/.style={circle,fill,inner sep=0pt, minimum size=0.12cm}]
    \node (S) at (2,3) { };
    \node (A) at (0,0) { };
    \node (B) at (1,0) { };
    \node (C) at (2,0) { };
    \node (D) at (3,0) { };
    \node (E) at (4,0) { };    
    \path (S) edge (A); 
    \path (S) edge (B); 
    \path (S) edge (C);
    \path (S) edge (D);
    \path (S) edge (E);
\end{tikzpicture}
\qquad\raise2ex\hbox{b)~}
\begin{tikzpicture}[scale=0.75, every node/.style={circle,fill,inner sep=0pt, minimum size=0.12cm}]   
    \node (1) at (0,0) { };
    \node (2) at (1.5,0) { };
    \node (3) at (3,0) { };
    \node (4) at (0,1.5) { };
    \node (5) at (1.5,1.5) { };
    \node (6) at (3,1.5) { }; 
    \node (7) at (0,3) { };
    \node (8) at (1.5,3) { };
    \node (9) at (3,3) { };
    
    \path (1) edge (2); 
    \path (2) edge (3); 
    \path (4) edge (5); 
    \path (5) edge (6); 
    \path (7) edge (8); 
    \path (8) edge (9); 
    \path (1) edge (5); 
    \path (2) edge (6); 
    \path (4) edge (8); 
    \path (5) edge (9); 
    \path (1) edge (4); 
    \path (4) edge (7);
    \path (2) edge (5); 
    \path (5) edge (8); 
    \path (3) edge (6); 
    \path (6) edge (9); 
\end{tikzpicture}
\qquad\raise2ex\hbox{c)~}
\begin{tikzpicture}[scale=0.33, every node/.style={circle,fill=black, inner sep=0.4mm}]
    \node (S1) at (2,3) { };    \node (A1) at (0,0) { };    \node (B1) at (1,0) { };    \node (C1) at (2,0) { };    \node (D1) at (3,0) { };    \node (E1) at (4,0) { }; 
    \path (S1) edge (A1);     \path (S1) edge (B1);     \path (S1) edge (C1);    \path (S1) edge (D1);    \path (S1) edge (E1);
    \node (S2) at (8,3) { };    \node (A2) at (6,0) { };    \node (B2) at (7,0) { };    \node (C2) at (8,0) { };    \node (D2) at (9,0) { };    \node (E2) at (10,0) { }; 
    \path (S2) edge (A2);     \path (S2) edge (B2);     \path (S2) edge (C2);    \path (S2) edge (D2);    \path (S2) edge (E2);
    \node (S3) at (14,3) { };    \node (A3) at (12,0) { };    \node (B3) at (13,0) { };    \node (C3) at (14,0) { };    \node (D3) at (15,0) { };    \node (E3) at (16,0) { }; 
    \path (S3) edge (A3);     \path (S3) edge (B3);     \path (S3) edge (C3);    \path (S3) edge (D3);    \path (S3) edge (E3);
    \node (S4) at (2,7) { };    \node (A4) at (0,4) { };    \node (B4) at (1,4) { };    \node (C4) at (2,4) { };    \node (D4) at (3,4) { };    \node (E4) at (4,4) { }; 
    \path (S4) edge (A4);     \path (S4) edge (B4);     \path (S4) edge (C4);    \path (S4) edge (D4);    \path (S4) edge (E4);
    \node (S5) at (8,7) { };    \node (A5) at (6,4) { };    \node (B5) at (7,4) { };    \node (C5) at (8,4) { };    \node (D5) at (9,4) { };    \node (E5) at (10,4) { }; 
    \path (S5) edge (A5);     \path (S5) edge (B5);     \path (S5) edge (C5);    \path (S5) edge (D5);    \path (S5) edge (E5);
    \node (S6) at (14,7) { };    \node (A6) at (12,4) { };    \node (B6) at (13,4) { };    \node (C6) at (14,4) { };    \node (D6) at (15,4) { };    \node (E6) at (16,4) { }; 
    \path (S6) edge (A6);     \path (S6) edge (B6);     \path (S6) edge (C6);    \path (S6) edge (D6);    \path (S6) edge (E6);
    \node (S7) at (2,11) { };    \node (A7) at (0,8) { };    \node (B7) at (1,8) { };    \node (C7) at (2,8) { };    \node (D7) at (3,8) { };    \node (E7) at (4,8) { }; 
    \path (S7) edge (A7);     \path (S7) edge (B7);     \path (S7) edge (C7);    \path (S7) edge (D7);    \path (S7) edge (E7);
    \node (S8) at (8,11) { };    \node (A8) at (6,8) { };    \node (B8) at (7,8) { };    \node (C8) at (8,8) { };    \node (D8) at (9,8) { };    \node (E8) at (10,8) { }; 
    \path (S8) edge (A8);     \path (S8) edge (B8);     \path (S8) edge (C8);    \path (S8) edge (D8);    \path (S8) edge (E8);
    \node (S9) at (14,11) { };    \node (A9) at (12,8) { };    \node (B9) at (13,8) { };    \node (C9) at (14,8) { };    \node (D9) at (15,8) { };    \node (E9) at (16,8) { }; 
    \path (S9) edge (A9);     \path (S9) edge (B9);     \path (S9) edge (C9);    \path (S9) edge (D9);    \path (S9) edge (E9);
    \path (S1) edge[color=blue, opacity=0.4, bend left=20] (S2); 
    \path (S2) edge[color=blue, opacity=0.4, bend left=20] (S3); 
    \path (S4) edge[color=blue, opacity=0.4, bend left=20] (S5); 
    \path (S5) edge[color=blue, opacity=0.4, bend left=20] (S6); 
    \path (S7) edge[color=blue, opacity=0.4, bend left=20] (S8); 
    \path (S8) edge[color=blue, opacity=0.4, bend left=20] (S9); 
    \path (A1) edge[color=blue, opacity=0.4, bend left=20] (A2); 
    \path (A2) edge[color=blue, opacity=0.4, bend left=20] (A3); 
    \path (A4) edge[color=blue, opacity=0.4, bend left=20] (A5); 
    \path (A5) edge[color=blue, opacity=0.4, bend left=20] (A6); 
    \path (A7) edge[color=blue, opacity=0.4, bend left=20] (A8); 
    \path (A8) edge[color=blue, opacity=0.4, bend left=20] (A9); 
    \path (B1) edge[color=blue, opacity=0.4, bend left=20] (B2); 
    \path (B2) edge[color=blue, opacity=0.4, bend left=20] (B3); 
    \path (B4) edge[color=blue, opacity=0.4, bend left=20] (B5); 
    \path (B5) edge[color=blue, opacity=0.4, bend left=20] (B6); 
    \path (B7) edge[color=blue, opacity=0.4, bend left=20] (B8); 
    \path (B8) edge[color=blue, opacity=0.4, bend left=20] (B9); 
    \path (C1) edge[color=blue, opacity=0.4, bend left=20] (C2); 
    \path (C2) edge[color=blue, opacity=0.4, bend left=20] (C3); 
    \path (C4) edge[color=blue, opacity=0.4, bend left=20] (C5); 
    \path (C5) edge[color=blue, opacity=0.4, bend left=20] (C6); 
    \path (C7) edge[color=blue, opacity=0.4, bend left=20] (C8); 
    \path (C8) edge[color=blue, opacity=0.4, bend left=20] (C9); 
    \path (D1) edge[color=blue, opacity=0.4, bend left=20] (D2); 
    \path (D2) edge[color=blue, opacity=0.4, bend left=20] (D3); 
    \path (D4) edge[color=blue, opacity=0.4, bend left=20] (D5); 
    \path (D5) edge[color=blue, opacity=0.4, bend left=20] (D6); 
    \path (D7) edge[color=blue, opacity=0.4, bend left=20] (D8); 
    \path (D8) edge[color=blue, opacity=0.4, bend left=20] (D9); 
    \path (E1) edge[color=blue, opacity=0.4, bend left=20] (E2); 
    \path (E2) edge[color=blue, opacity=0.4, bend left=20] (E3); 
    \path (E4) edge[color=blue, opacity=0.4, bend left=20] (E5); 
    \path (E5) edge[color=blue, opacity=0.4, bend left=20] (E6); 
    \path (E7) edge[color=blue, opacity=0.4, bend left=20] (E8); 
    \path (E8) edge[color=blue, opacity=0.4, bend left=20] (E9);
    \path (A1) edge[color=green,bend left=20] (A5); 
    \path (A2) edge[color=green,bend left=20] (A6); 
    \path (A4) edge[color=green,bend left=20] (A8); 
    \path (A5) edge[color=green,bend left=20] (A9); 
    \path (B1) edge[color=green,bend left=20] (B5); 
    \path (B2) edge[color=green,bend left=20] (B6); 
    \path (B4) edge[color=green,bend left=20] (B8); 
    \path (B5) edge[color=green,bend left=20] (B9); 
    \path (C1) edge[color=green,bend left=20] (C5); 
    \path (C2) edge[color=green,bend left=20] (C6); 
    \path (C4) edge[color=green,bend left=20] (C8); 
    \path (C5) edge[color=green,bend left=20] (C9); 
    \path (D1) edge[color=green,bend left=20] (D5); 
    \path (D2) edge[color=green,bend left=20] (D6); 
    \path (D4) edge[color=green,bend left=20] (D8); 
    \path (D5) edge[color=green,bend left=20] (D9); 
    \path (E1) edge[color=green,bend left=20] (E5); 
    \path (E2) edge[color=green,bend left=20] (E6); 
    \path (E4) edge[color=green,bend left=20] (E8); 
    \path (E5) edge[color=green,bend left=20] (E9); 
    \path (S1) edge[color=green,bend left=20] (S5); 
    \path (S2) edge[color=green,bend left=20] (S6); 
    \path (S4) edge[color=green,bend left=20] (S8); 
    \path (S5) edge[color=green,bend left=20] (S9);
    \path (A1) edge[color=red, opacity=0.4, bend left=20] (A4); 
    \path (A4) edge[color=red, opacity=0.4, bend left=20] (A7);
    \path (A2) edge[color=red, opacity=0.4, bend left=20] (A5); 
    \path (A5) edge[color=red, opacity=0.4, bend left=20] (A8); 
    \path (A3) edge[color=red, opacity=0.4, bend left=20] (A6); 
    \path (A6) edge[color=red, opacity=0.4, bend left=20] (A9); 
    \path (B1) edge[color=red, opacity=0.4, bend left=20] (B4); 
    \path (B4) edge[color=red, opacity=0.4, bend left=20] (B7);
    \path (B2) edge[color=red, opacity=0.4, bend left=20] (B5); 
    \path (B5) edge[color=red, opacity=0.4, bend left=20] (B8); 
    \path (B3) edge[color=red, opacity=0.4, bend left=20] (B6); 
    \path (B6) edge[color=red, opacity=0.4, bend left=20] (B9); 
    \path (C1) edge[color=red, opacity=0.4, bend left=20] (C4); 
    \path (C4) edge[color=red, opacity=0.4, bend left=20] (C7);
    \path (C2) edge[color=red, opacity=0.4, bend left=20] (C5); 
    \path (C5) edge[color=red, opacity=0.4, bend left=20] (C8); 
    \path (C3) edge[color=red, opacity=0.4, bend left=20] (C6); 
    \path (C6) edge[color=red, opacity=0.4, bend left=20] (C9); 
    \path (D1) edge[color=red, opacity=0.4, bend left=20] (D4); 
    \path (D4) edge[color=red, opacity=0.4, bend left=20] (D7);
    \path (D2) edge[color=red, opacity=0.4, bend left=20] (D5); 
    \path (D5) edge[color=red, opacity=0.4, bend left=20] (D8); 
    \path (D3) edge[color=red, opacity=0.4, bend left=20] (D6); 
    \path (D6) edge[color=red, opacity=0.4, bend left=20] (D9); 
    \path (E1) edge[color=red, opacity=0.4, bend left=20] (E4); 
    \path (E4) edge[color=red, opacity=0.4, bend left=20] (E7);
    \path (E2) edge[color=red, opacity=0.4, bend left=20] (E5); 
    \path (E5) edge[color=red, opacity=0.4, bend left=20] (E8); 
    \path (E3) edge[color=red, opacity=0.4, bend left=20] (E6); 
    \path (E6) edge[color=red, opacity=0.4, bend left=20] (E9); 
    \path (S1) edge[color=red, opacity=0.4, bend left=20] (S4); 
    \path (S4) edge[color=red, opacity=0.4, bend left=20] (S7);
    \path (S2) edge[color=red, opacity=0.4, bend left=20] (S5); 
    \path (S5) edge[color=red, opacity=0.4, bend left=20] (S8); 
    \path (S3) edge[color=red, opacity=0.4, bend left=20] (S6); 
    \path (S6) edge[color=red, opacity=0.4, bend left=20] (S9); 
\end{tikzpicture}
    \caption{(a) The star $S_5$, (b) the graph $H_3$, and (c) their Cartesian product $S_5\square H_3$.
	The four edge colours illustrate a queue layout for $S_5\square H_3$.}
	\label{fig:SnHn}
\end{figure}
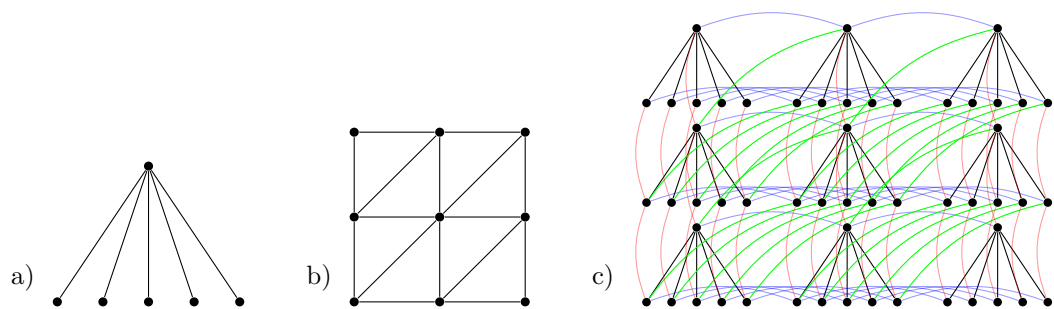

Recall that $S_n$ is the star with $n$ leaves, and that $G_1\square G_2$ denotes the Cartesian product of two graphs $G_1$ and~$G_2$.
Dujmovi\'c et al.~\cite{dujmovic2021stack} showed that, for all integers $a,n>0$ and the Cartesian product $G=S_a\square H_n$,
we have $\qn(G)\leq4$.
In fact, they noted that every $H_n$ admits a so-called {\em strict} $3$-queue layout, 
which ``adds up'' with a trivial $1$-queue layout of $S_a$ over Cartesian product by Wood~\cite{wood2005queue}.
Their main result reads:

\begin{theorem}[Dujmovi\'c et al.~\cite{dujmovic2021stack}]\label{thm:main}
For every integer $s$, and for $a,n>0$ which are sufficiently large with respect to~$s$,
the Cartesian product $G:= S_a \square H_n$ is of stack number at least~$s$.
\end{theorem}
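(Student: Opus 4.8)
The plan is to argue by contradiction. Suppose $G=S_a\square H_n$ has a $k$-stack layout with $k:=s-1$, fix the corresponding vertex order $\prec$ on $V(G)$ and the $k$-colouring $\phi$ of $E(G)$, and aim to show that then $a$ (and $n$) must be bounded in terms of $k$. Throughout I use that $\sn$ is monotone under taking subgraphs, so I may freely discard edges of $G$; the only features of $H_n$ I will need are that it contains triangles --- e.g.\ $\{[1,1],[2,1],[2,2]\}$ --- and that, for large $n$, it is a large two-dimensional grid-like graph. (The diagonal edges of $H_n$ only make the triangles handy; one could work with the plain grid instead.)

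The heart of the argument concerns, for a single edge $e=ww'$ of $H_n$, its $a+1$ \emph{parallel copies} $e_v:=(v,w)(v,w')$, $v\in V(S_a)$. First I would pass, by repeated pigeonholing over the $a+1$ copies, to a large subfamily $B$ of star-vertices that is ``coherent'' for a few prescribed edges at once: every copy is oriented the same way in $\prec$ (say $(v,w)\prec(v,w')$ for all $v\in B$), and all the intervals $\bigl[(v,w),(v,w')\bigr]$, $v\in B$, share a common point of $\prec$. On such a subfamily, two copies $e_v,e_{v'}$ either $\prec$-cross or $\prec$-nest, and they nest precisely when the $\prec$-order of the ``$w$-endpoints'' along $B$ disagrees with that of the ``$w'$-endpoints''. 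Hence, letting $\sigma_e$ be the permutation of $B$ that sends the $\prec$-rank of a $w$-endpoint to the $\prec$-rank of the corresponding $w'$-endpoint, the part of the crossing graph induced on $\{e_v:v\in B\}$ is exactly the comparability graph of the two-dimensional order given by those two rankings; as it is $k$-colourable, and the chromatic number of a comparability graph equals the size of a longest chain, the longest increasing subsequence of $\sigma_e$ has length at most $k$.

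Now fix a triangle $ww'w''$ of $H_n$ and a subfamily $B$ coherent for all three of its edges simultaneously. Keeping track of the rankings, one gets $\sigma_{ww''}=\sigma_{w'w''}\circ\sigma_{ww'}$, with all three permutations of $B$ having longest increasing subsequence $\le k$. The combinatorial punchline is: if $\alpha,\beta$ are permutations of a set $B$ of size $N$ each with longest increasing subsequence $\le k$, then, splitting $B$ into $k$ blocks on which $\alpha$ is decreasing and $k$ blocks on which $\beta$ is decreasing (Dilworth/Mirsky), one obtains $\le k^2$ blocks on each of which $\beta\circ\alpha$ is \emph{increasing}; so $\beta\circ\alpha$ has longest \emph{decreasing} subsequence $\le k^2$. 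Applying this with $\alpha=\sigma_{ww'}$, $\beta=\sigma_{w'w''}$, and then using that $\sigma_{ww''}$ also has longest increasing subsequence $\le k$, the Erd\H{o}s--Szekeres inequality yields $N\le k\cdot k^2=k^3$. Thus $|B|$ is bounded; unwinding the pigeonholing then bounds $a$ in terms of $k$, the desired contradiction.

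The step I expect to be the real obstacle --- and where the largeness of $n$ has to be used --- is producing a \emph{large} coherent subfamily $B$. Making the copies of an edge uniformly oriented costs only a constant factor, but forcing them to \emph{overlap} (to share a point of $\prec$) can genuinely fail: an adversary is free to lay out the copies of $e$ as pairwise $\prec$-disjoint intervals, so that no two of them cross and $\sigma_e$ is not even defined. This already happens for $S_a\square K_3$, whose stack number is bounded, so a single triangle cannot suffice --- the proof must exploit that for large $n$ the $a$ leaf-copies of $H_n$ cannot all be laid out compactly at once, because any such attempt creates a large ``rainbow'' of pairwise crossing edges (among the $H_n$-edges or the star-edges) and hence needs more than $k$ pages. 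Turning this intuition into a clean statement that supplies, somewhere among the many triangles of $H_n$, a large overlapping coherent subfamily is the technical core I would need to work out.
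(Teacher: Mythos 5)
Your endgame is sound as far as it goes: on a family of parallel copies of an $H_n$-edge that are uniformly oriented and pairwise overlapping, the crossing graph is indeed the comparability graph of a two-dimensional order, so $k$-colourability bounds the longest increasing subsequence of $\sigma_e$ by $k$; and the composition/Erd\H{o}s--Szekeres bound $|B|\le k^3$ over a triangle is a correct (if unnecessary) way to finish \emph{once such a family exists}. But the existence of a large coherent \emph{overlapping} subfamily $B$ is precisely the content of the theorem, and your proposal leaves it entirely open --- you say so yourself, correctly observing that an adversary can lay the copies out as pairwise disjoint intervals (as the bounded stack number of $S_a\square K_3$ shows), so that $\sigma_e$ is not even defined. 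Everything in the paper's proof is devoted to closing exactly this gap, and it does so by a route your sketch does not anticipate: it never forces the copies of a single edge to overlap. Instead it (1) extracts, by iterating Erd\H{o}s--Szekeres over all $n^2$ vertices of $H_n$, a large set of leaves whose copies are monotonically ordered at \emph{every} vertex of $H_n$; (2) invokes Gale's Hex lemma to find a path $Q$ on $n$ vertices of $H_n$ on which the orientation is the same; (3) proves that two leaf-copies of the whole path $Q$ are either $\prec$-separated or $\prec$-crossing, and applies Ramsey to get many of one kind; and (4) in the separated case --- the case your plan cannot handle --- produces the rainbow not from parallel $H_n$-edges at all but from the \emph{star} edges joining $\lceil n/2\rceil$ root-copies on one side to $\lfloor c/2\rfloor$ separated leaf-paths on the other. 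This is where the largeness of $n$ and the star structure genuinely enter, and no amount of pigeonholing over triangles of $H_n$ substitutes for it.

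In the crossing case the paper also reaches the conclusion more directly than your comparability-graph argument would: once many path-copies pairwise cross, pigeonholing over the $n$ edges of $R_0$ and over the adjacent copies of $S_b$ yields a set of edges crossing a common edge $e$, uniformly oriented and all on the same side, and the monotonicity from step (1) forces these to pairwise cross outright --- no permutation composition needed. So the proposal is not wrong in what it proves, but it is missing the central idea (the separated/crossing dichotomy for path-copies and the use of star edges in the separated case), and as written it does not constitute a proof of the theorem.
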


Our contribution is to give a very short simplified proof of \Cref{thm:main} (based in parts on the ideas from \cite{dujmovic2021stack},
but also eliminating some rather long fragments of the former proof).

\section{Proof of \Cref{thm:main}}
\label{chapter_proof}

We will use some classical results, the first two of which are truly folklore.
\begin{proposition}[Ramsey~\cite{ramsey}]\label{ramsey}
For all integers $r,s>0$ there exists $R=R(r,s)$ such that for any assignment of two colours read and blue to the edges of 
the complete graph $K_R$, there is a red clique on $r$ vertices or a blue clique on $s$ vertices in it.
\end{proposition}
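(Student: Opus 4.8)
The plan is to establish the finiteness of $R(r,s)$ by induction on $r+s$, using the standard recursive estimate $R(r,s)\le R(r-1,s)+R(r,s-1)$.

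First I would dispose of the base cases. Taking $r=1$, a single vertex is (vacuously) a red clique on $r$ vertices, so $R(1,s)=1$ works; symmetrically $R(r,1)=1$. If one prefers to anchor the induction at the value $2$, observe instead that $R(2,s)=s$ and $R(r,2)=r$: in $K_s$ either some edge is red, yielding a red $K_2$, or every edge is blue, yielding a blue $K_s$ (and dually for $R(r,2)$).

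For the inductive step, fix $r,s\ge 2$ and assume that the quantities $R(r-1,s)$ and $R(r,s-1)$ already exist. Put $R:=R(r-1,s)+R(r,s-1)$ and take an arbitrary red/blue colouring of $E(K_R)$. Pick a vertex $v$; its $R-1=R(r-1,s)+R(r,s-1)-1$ incident edges cannot be both fewer than $R(r-1,s)$ in red and fewer than $R(r,s-1)$ in blue, so by the pigeonhole principle one of the two bounds is attained. If $v$ has a set $U$ of at least $R(r-1,s)$ red neighbours, then by the inductive hypothesis the colouring restricted to $U$ produces either a blue clique on $s$ vertices (and we are finished) or a red clique on $r-1$ vertices, which together with $v$ is a red clique on $r$ vertices. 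The case of at least $R(r,s-1)$ blue neighbours is the mirror image, with the two colours swapped. This proves $R(r,s)\le R(r-1,s)+R(r,s-1)$, completing the induction.

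I do not expect a genuine obstacle here: the argument is entirely routine. The only points that need a little care are the correct choice and verification of the base cases and the pigeonhole split on the colours of the edges incident with $v$. Unrolling the recursion even yields the explicit bound $R(r,s)\le\binom{r+s-2}{r-1}$, although only the mere finiteness of $R(r,s)$ will be needed in what follows.
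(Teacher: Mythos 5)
Your argument is the standard double induction via the recursion $R(r,s)\le R(r-1,s)+R(r,s-1)$, and it is correct as written: the base cases are handled properly, the pigeonhole split at the chosen vertex $v$ is valid, and the two symmetric branches of the inductive step are complete. Note, however, that the paper offers no proof of this proposition at all --- it is stated as a classical folklore result with a citation to Ramsey's original paper --- so there is nothing in the paper to compare your proof against; your write-up would simply be supplying a standard textbook proof that the authors deliberately omitted.
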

\begin{proposition}[Erd\H{o}s--Szekeres~\cite{erdos}]\label{erdos-szekeres}
For given integers $r,s>0$, any sequence of distinct elements of a linearly ordered set of length more than $r\cdot s$ 
contains an increasing subsequence of length $s+1$ or a decreasing subsequence of length~$r+1$.
\end{proposition}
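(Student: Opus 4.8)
The plan is to use the classical pigeonhole argument (due to Seidenberg). Write the sequence as $a_1,a_2,\ldots,a_N$ where $N>r\cdot s$ and all $a_i$ are distinct elements of the linearly ordered set. To each index $i$ I would associate the pair $(x_i,y_i)$, where $x_i$ is the length of the longest increasing subsequence of the given sequence that \emph{ends} precisely at the term $a_i$, and $y_i$ is the length of the longest decreasing subsequence ending at $a_i$. Both quantities are well-defined positive integers, being at least $1$ by taking the single-term subsequence $a_i$.

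The key step is to show that the map $i\mapsto(x_i,y_i)$ is injective. I would take two indices $i<j$; since the elements are distinct, either $a_i<a_j$ or $a_i>a_j$. In the first case, any longest increasing subsequence ending at $a_i$ can be lengthened by appending $a_j$, whence $x_j\geq x_i+1$. In the second case, the analogous extension of a longest decreasing subsequence yields $y_j\geq y_i+1$. Either way $(x_i,y_i)\neq(x_j,y_j)$, so distinct indices receive distinct pairs.

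Now I would argue by contradiction. Suppose the sequence contains neither an increasing subsequence of length $s+1$ nor a decreasing subsequence of length $r+1$. Then $x_i\in\{1,\ldots,s\}$ and $y_i\in\{1,\ldots,r\}$ for every $i$, so all the pairs $(x_i,y_i)$ lie in a set of cardinality exactly $r\cdot s$. But there are $N>r\cdot s$ indices, and the map is injective, which contradicts the pigeonhole principle. Hence at least one of the two forbidden subsequences must in fact exist, proving the claim.

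There is essentially no serious obstacle here; the only point requiring a moment's care is the injectivity argument, and in particular the observation that distinctness of the elements forces the strict comparison $a_i<a_j$ or $a_i>a_j$ (excluding equality), which is exactly what permits strictly increasing one of the two coordinates when passing from $i$ to $j$. Everything else is the bookkeeping counting argument with the bound $r\cdot s<N$.
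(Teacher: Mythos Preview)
Your argument is the standard Seidenberg pigeonhole proof and is entirely correct. Note that the paper does not actually supply a proof of this proposition: it is stated as a classical folklore result with a citation to Erd\H{o}s--Szekeres, so there is no in-paper proof to compare against; your write-up would serve perfectly well as a self-contained justification.
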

\begin{proposition}[Gale~\cite{hex}]\label{hex_lemma}
Consider a dual hexagonal grid $H_n$ as above. For any assignment of two colours to the vertices of $H_n$,
there exists a monochromatic path on $n$ vertices.
\end{proposition}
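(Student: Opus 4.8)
The plan is to prove the stronger, classical ``Hex theorem'': for the triangulated board $H_n$ and any assignment of the two colours to its vertices, there is a monochromatic path joining a pair of opposite sides of the board, and any such crossing path automatically has at least $n$ vertices. First I would fix the planar picture: placing $[a,b]$ at the point $(a,b)$, the graph $H_n$ is exactly the $n\times n$ integer grid with the ``north-east'' diagonals $[a,b][a{+}1,b{+}1]$ added, so that its bounded faces are precisely the unit triangles $\{[a,b],[a{+}1,b],[a{+}1,b{+}1]\}$ and $\{[a,b],[a,b{+}1],[a{+}1,b{+}1]\}$. I would then augment this board with a coloured frame, declaring the exterior to be red along the left and right sides (columns $a=1$ and $a=n$) and blue along the top and bottom (rows $b=1$ and $b=n$), with the four corners resolved in a fixed consistent way.

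The core of the argument is an interface-tracing count. Call an edge of $H_n$ (or of the frame) \emph{bichromatic} if its two endpoints receive different colours. The decisive observation, and the reason the diagonals matter, is that inside every triangular face at most two of the three edges can be bichromatic: three vertices coloured with two colours are never pairwise distinct, so each triangle carries exactly $0$ or $2$ bichromatic edges. Consequently, if I place a marker in each face (including the outer face) and join two faces whenever they share a bichromatic edge, then every bounded face becomes a marker of degree $0$ or $2$. Hence the bichromatic interface decomposes into disjoint cycles and paths whose endpoints can lie only on the frame, i.e.\ only where the exterior colour changes. By construction the exterior colour switches precisely at the four corners, so the interface contains a path running from one corner to another; walking along it with, say, red consistently on one side and blue on the other, this interface path separates the board in such a way that one of the two colour classes must contain a path joining its own pair of opposite sides.

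Finally I would convert such a crossing path into the stated conclusion: a path that meets both the column $a=1$ and the column $a=n$ (or both the row $b=1$ and the row $b=n$) must take at least $n-1$ steps, since every edge of $H_n$ changes each coordinate by at most $1$; it therefore has at least $n$ vertices, and any subpath on exactly $n$ of them proves the proposition. I expect the main obstacle to be the boundary bookkeeping in the middle step: making the frame and the treatment of the four corners precise enough that the parity argument genuinely certifies a \emph{monochromatic} crossing, rather than merely an interface that could a priori fail to bound a single-coloured crossing, while keeping the write-up elementary and free of heavier topological machinery.
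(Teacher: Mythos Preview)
Your approach is correct and is essentially Gale's original argument: exploit that every triangular face carries $0$ or $2$ bichromatic edges, trace the resulting interface in the face graph from a corner of an added two-coloured frame, and read off a monochromatic side-to-side crossing, which then has $\ge n$ vertices since each edge of $H_n$ shifts each coordinate by at most~$1$. The one place that genuinely needs work is exactly the one you flag: once the interface path lands at a second corner, you must argue that the vertices on one fixed side of it are all of the same colour \emph{and} induce a connected subgraph touching both of that colour's sides; this is standard but does require a careful local check along the walk, not just a separation statement.

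The paper takes a different route. Instead of passing to the dual and tracing bichromatic interfaces, it stays inside $H_n$ and builds a sequence of monochromatic components $Y_1,Y_2,\ldots$: $Y_1$ is the component of the colour of $[1,1]$, and $Y_{i+1}$ is the monochromatic component containing the ``far boundary'' of $Y_i$ (the neighbours of $Y_i$ lying in the component of $H_n-Y_i$ that contains $[n,n]$). Using only that $H_n$ is a plane near-triangulation, each far boundary is connected and still meets both the left and bottom sides, so the sequence is well defined and eventually some $Y_p$ hits the top or right side, yielding the crossing. Your argument is the classical one and makes the $0/2$-parity the single clean idea, at the cost of the frame and corner bookkeeping; the paper's argument avoids any auxiliary frame or dual graph and keeps everything as statements about components and neighbourhoods in $H_n$, which is why the authors describe it as staying ``on the purely combinatorial side''.
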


Consider for the rest any fixed stack layout of the graph $G$ of \Cref{thm:main}, with the linear order~$\prec$ on the vertex set~$V(G)$.
Recall that $V(G)=\{(u,p): u\in V(S_a), p\in V(H_n)\}$.

\begin{lemma}
Let $L$ be the set of leaves of $S_a$, and let $b=a^{1/m}$ where $m=2^{n^2-1}$.
There is a subsequence $(u_1,\ldots,u_b)$ in the set $L$ of length $b$ such that for each $p \in V(H_n)$, 
either $(u_1, p) \prec (u_2, p) \prec \ldots \prec  (u_b, p)$, or $(u_1, p) \succ (u_2, p) \succ \ldots \succ  (u_b, p)$.
\label{erdos_lemma}
\end{lemma}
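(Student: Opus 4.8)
The plan is to observe that \Cref{erdos_lemma} is purely order-theoretic: the stack layout enters only through the linear order $\prec$ of $V(G)$, no special property of which is used, and the conclusion follows by iterating the Erd\H{o}s--Szekeres lemma (\Cref{erdos-szekeres}) once per vertex of $H_n$.

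First I would fix notation. Enumerate $V(H_n)=\{p_1,\dots,p_N\}$, where $N=|V(H_n)|=n^2$, and recall $|L|=a$. For each $j\in\{1,\dots,N\}$, the order $\prec$ restricted to $\{(u,p_j):u\in L\}$ induces a linear order $\prec_j$ of $L$; these $N$ orders are the only data used in the proof. The goal then reads: find $U\subseteq L$ with $|U|=b$ such that, listing $U$ in $\prec_1$-increasing order as $(u_1,\dots,u_b)$, this sequence is monotone (increasing or decreasing, depending on $j$) with respect to every $\prec_j$.

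Then I would refine iteratively. Let $\sigma_1=(w_1,\dots,w_a)$ list $L$ in $\prec_1$-increasing order; it has length $a$ and is trivially $\prec_1$-monotone. Inductively, given a subsequence $\sigma_{j-1}$ of $\sigma_1$ of length $\ell_{j-1}$ that is monotone for each of $\prec_1,\dots,\prec_{j-1}$, view $\sigma_{j-1}$ as a sequence of distinct elements of the linearly ordered set $(L,\prec_j)$ and apply \Cref{erdos-szekeres} with $r=s=\lceil\sqrt{\ell_{j-1}}\,\rceil-1$ (so that $r\cdot s<\ell_{j-1}$); this extracts a $\prec_j$-monotone subsequence $\sigma_j$ of length $\ell_j\ge\lceil\sqrt{\ell_{j-1}}\,\rceil\ge\sqrt{\ell_{j-1}}$, which, being a subsequence of $\sigma_{j-1}$, remains monotone for $\prec_1,\dots,\prec_{j-1}$ as well. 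After all $N-1$ steps $j=2,\dots,N$ (note that $\prec_1$ was handled for free by the choice of $\sigma_1$), the sequence $\sigma_N$ is monotone with respect to every $\prec_j$ and has length $\ell_N\ge a^{1/2^{N-1}}=a^{1/m}=b$; taking $(u_1,\dots,u_b)$ to be its first $b$ entries finishes the proof.

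The only delicate point, and the source of the constant $m=2^{n^2-1}$, is the exponent bookkeeping: each of the $n^2-1$ non-trivial Erd\H{o}s--Szekeres steps can, in the worst case, only guarantee that the current length $\ell$ survives as $\sqrt{\ell}$, so after all of them $a^{1/m}$ elements remain (the ceilings only help, so the rounding is harmless). I do not expect a genuine obstacle beyond taking care to spend one coordinate as the reference order $\prec_1$, so that exactly $n^2-1$, rather than $n^2$, square-root losses occur.
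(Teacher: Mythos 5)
Your proof is correct and follows essentially the same route as the paper's: enumerate $V(H_n)$ as $p_1,\dots,p_{n^2}$, take the order induced by $p_1$ for free, and apply \Cref{erdos-szekeres} once for each remaining vertex, losing at most a square root of the length each time, which yields the bound $a^{1/2^{n^2-1}}=a^{1/m}=b$ (note the lemma's ``$b=a^{-m}$'' is a typo for $b=a^{1/m}$, and your bookkeeping matches the intended statement). No substantive differences from the paper's argument.
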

\begin{proof}
Let $V(H_n)=\{p_1,\ldots,p_{n^2}\}$ be the vertices of $H_n$.
Let $L=\{u[1],u[2],\ldots,u[a]\}$ where we write $u[i]$ instead of traditional index form $u_i$ to avoid nested indices.
Start with $a_1=a$ and the permutation $\sigma_1=\big(i_{1,1},\ldots,i_{1,a_1}\big)$ of $\{1,\ldots,a\}$
such that $(u[i_{1,1}], p_1) \prec\ldots\prec (u[i_{1,a_1}], p_1)$.
By iterating \Cref{erdos-szekeres}, for each $j=2,\ldots,n^2$, the sequence $\sigma_{j-1}$
contains a subsequence $\sigma_j = (i_{j,1}, \ldots, i_{j,a_j})$ such that $a_j\geq \sqrt{a_{j-1}}$,
and $(u[i_{j,1}], p_j) \prec\ldots \prec (u[i_{j,a_j}],p_j)$ or $(u[i_{j,1}], p_j) \succ\ldots \succ (u[i_{j,a_j}],p_j)$.
By simple calculus, we get $a_{n^2}\geq a_1^{1/m}=b$ which is the desired outcome.
\end{proof}

Let $S_b\subseteq S_a$ be the (specific) substar of $S_a$ defined by the subset of leaves $\{u_1,\ldots,u_b\}$ (of \Cref{erdos_lemma}).
Colour every vertex $p \in V(H_n)$ red if $(u_1, p) \prec \ldots \prec (u_b, p)$, and colour $p$ blue otherwise.
From this and \Cref{hex_lemma}, we immediately obtain:
\begin{corollary}\label{cor:sameord}
There is a subgraph $Q\subseteq H_n$, being a path on $n$ vertices, such that, without loss of generality, 
$(u_1, q) \prec \ldots \prec (u_b, q)$ holds for every vertex $q\in V(Q)$.
\qed\end{corollary}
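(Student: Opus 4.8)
The plan is to apply Gale's hex lemma directly to the red/blue colouring of $V(H_n)$ introduced just before the corollary. Recall that a vertex $p\in V(H_n)$ was coloured red exactly when $(u_1,p)\prec\ldots\prec(u_b,p)$, and blue otherwise, so every vertex of $H_n$ receives one of the two colours. Hence by \Cref{hex_lemma} there is a monochromatic path $Q\subseteq H_n$ on $n$ vertices, and this $Q$ is the subgraph we want.

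If the path $Q$ is red, then by the definition of the colouring we have $(u_1,q)\prec\ldots\prec(u_b,q)$ for every $q\in V(Q)$, which is precisely the claimed conclusion. If instead $Q$ is blue, then $(u_1,q)\succ\ldots\succ(u_b,q)$ for every $q\in V(Q)$; in this case I would simply reverse the indexing of the leaves, replacing the subsequence $(u_1,\ldots,u_b)$ of \Cref{erdos_lemma} by $(u_b,u_{b-1},\ldots,u_1)$. A monotone sequence read backwards is still monotone (in the opposite direction), so the reversed sequence still satisfies the conclusion of \Cref{erdos_lemma}, and with respect to this relabelling the path $Q$ becomes ``red''. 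This symmetry between the two colours is exactly what the phrase ``without loss of generality'' in the statement refers to.

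There is essentially no obstacle in this step: all the real work has already been carried out in \Cref{erdos_lemma} (the iterated Erd\H{o}s--Szekeres argument producing a leaf subsequence that behaves monotonically at every vertex of $H_n$ simultaneously) and in \Cref{hex_lemma}. The only point needing a line of care is that the colouring is defined relative to the fixed ordering of the $b$ leaves, so the reversal in the blue case must be applied to that ordering and nothing else; no other part of the argument distinguishes between the two cases.
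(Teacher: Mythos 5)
Your proposal is correct and matches the paper's argument exactly: the paper also defines the red/blue colouring from \Cref{erdos_lemma}, applies \Cref{hex_lemma} to obtain a monochromatic path, and absorbs the blue (decreasing) case into the ``without loss of generality'' by reversing the leaf indexing. Nothing further is needed.
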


Define $X\subseteq G$ to be the subgraph induced on the vertex set $V(S_b)\times V(Q)$, i.e., $X=S_b\square Q$,
and denote by $\ca R$ the set of paths $R_i\subseteq X$ induced on $\{u_i\}\times V(Q)$ for $i=1,\ldots,b$.
We extend $\prec$ to a partial order on $\ca R$ as follows; for $R_i,R_j\in\ca R$, we have $R_i\prec R_j$,
if and only if $u\prec w$ for all $u\in V(R_i)$ and $w\in V(R_j)$.
We say that $R_i$ and $R_j$ are {\em$\prec$-separated} if $R_i\prec R_j$ or $R_i\succ R_j$, and that
$R_i$ and $R_j$ are {\em$\prec$-crossing} if there exist edges $e\in E(R_i)$ and $f\in E(R_j)$ such that $e,f$ $\prec$-cross.
The following result is simple but crucial:

\begin{lemma}\label{lem:crossepar}
    Every two distinct paths $R_i, R_j \in \mathcal{R}$ are either $\prec$-crossing, or $\prec$-separated.
\end{lemma}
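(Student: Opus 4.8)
The plan is to prove the contrapositive: assume that $R_i$ and $R_j$ are \emph{not} $\prec$-separated, and deduce that they are $\prec$-crossing. Relabel so that $i<j$, and for $k=1,\dots,n$ let $r_i^k$ and $r_j^k$ denote the vertices of $R_i$ and $R_j$ lying over the same vertex $q_k$ of the path $Q$. By \Cref{cor:sameord} we have $r_i^k\prec r_j^k$ for every $k$ (so $R_i$ lies ``pointwise below'' $R_j$); evaluating this at the index where $R_j$, resp.\ $R_i$, attains its $\prec$-extremum gives $\min_\prec V(R_i)\prec\min_\prec V(R_j)$ and $\max_\prec V(R_i)\prec\max_\prec V(R_j)$. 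Write $m_j:=\min_\prec V(R_j)$ and $M_i:=\max_\prec V(R_i)$. Since $r_i^k\prec r_j^k$ rules out $R_j\prec R_i$, the assumption that $R_i,R_j$ are not $\prec$-separated forces $\neg(R_i\prec R_j)$, i.e.\ $M_i\succ m_j$ (strict, as the two vertex sets are disjoint). Altogether,
\[
\min_\prec V(R_i)\;\prec\; m_j\;\prec\; M_i\;\prec\;\max_\prec V(R_j).
\]

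Next I would extract a crossing pair using connectivity of the paths. As $R_i$ is connected, has a vertex $\prec m_j$ and a vertex $\succ m_j$, and (being vertex-disjoint from $R_j$) no vertex equal to $m_j$, some edge $e=xy\in E(R_i)$ straddles $m_j$, say $x\prec m_j\prec y$. Now $R_j$ is connected, it contains $m_j$ with $x\prec m_j\prec y$, and it contains $\max_\prec V(R_j)\succ M_i\succeq y$; walking along $R_j$ from $m_j$ towards the vertex realising $\max_\prec V(R_j)$, let $w$ be the first vertex with $w\succ y$ and $z$ its predecessor on that subpath, so $f=zw\in E(R_j)$ with $z\prec y\prec w$ (the predecessor $z$ exists because $m_j\prec y$, whence $w\neq m_j$). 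Finally, $x\prec m_j\preceq z$ since $z\in V(R_j)$, so
\[
x\;\prec\;z\;\prec\;y\;\prec\;w,
\]
which says precisely that $e$ and $f$ $\prec$-cross. This contradicts the standing assumption and completes the argument.

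I do not expect a real obstacle here; the one point that needs care is that \Cref{cor:sameord} only compares a vertex of $R_i$ with the vertex of $R_j$ sitting over the \emph{same} vertex of $Q$, so each cross-comparison between the two paths must be routed either through a matching $Q$-index or through the global minimum $m_j$ of $R_j$ — and one should keep noting that vertex-disjointness of $R_i$ and $R_j$ turns all the relevant inequalities strict.
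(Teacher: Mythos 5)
Your proof is correct, but it takes a genuinely different route from the paper's. The paper argues by contradiction in one stroke: if $R_i$ and $R_j$ neither $\prec$-cross nor are $\prec$-separated, then (up to exchanging the roles of $i$ and $j$) all of $R_i$ is nested under a single edge $e_2=\{(u_j,q),(u_j,q')\}$ of $R_j$; in particular the parallel edge $e_1=\{(u_i,q),(u_i,q')\}$ is nested in $e_2$, which forces the relative $\prec$-order of $u_i$ and $u_j$ to flip between $q$ and $q'$, contradicting \Cref{cor:sameord}. You instead invoke \Cref{cor:sameord} at the outset to obtain the pointwise domination $r_i^k\prec r_j^k$, combine it with non-separation to get the strict interleaving $\min_\prec V(R_i)\prec m_j\prec M_i\prec\max_\prec V(R_j)$, and then run a discrete intermediate-value argument along each path to exhibit an explicit crossing pair $e=xy$, $f=zw$ with $x\prec z\prec y\prec w$. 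Each step checks out: the straddling edge of $R_i$ exists by connectedness and vertex-disjointness, the predecessor $z$ exists because $m_j\prec y$, and all the inequalities you need are strict for the reasons you give. What your version buys is that the connectivity step the paper compresses into ``up to symmetry meaning that all edges of $R_i$ are nested in some edge $e_2$'' is made fully explicit, and your argument would work for any two connected vertex-disjoint subgraphs satisfying the pointwise ordering, not just these particular copies of $Q$. What the paper's version buys is brevity: by reducing to the two edges lying over the same edge of $Q$, the contradiction with \Cref{cor:sameord} is immediate, with no need to locate the crossing pair explicitly.
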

\begin{proof}
    Assume the contrary; up to symmetry meaning that all edges of $R_i$ are nested in some edge $e_2 = \{(u_j,q),(u_j,q')\}\in E(R_j)$.
Then, in particular, $e_1 = \{(u_i,q),(u_i,q')\}\in E(R_i)$ is nested in $e_2$, and so $(u_j, q) \prec (u_i, q)$ and $(u_j, q') \succ (u_i, q')$.
This contradicts \Cref{cor:sameord}.
\end{proof}

\begin{corollary}\label{cor:crossepar}
For all integers $c,d$ and $n$, and for $b=|\ca R|$ sufficiently large with respect to $c,d$, we have that $\ca R$ contains
at least $c$ pairwise $\prec$-separated or $d$ pairwise $\prec$-crossing paths.
\end{corollary}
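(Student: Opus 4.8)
The plan is to read \Cref{cor:crossepar} as a direct consequence of \Cref{lem:crossepar} together with Ramsey's theorem (\Cref{ramsey}); note that the parameter $n$ plays no active role here, it is only inherited from the definition of $\ca R$. First I would build an auxiliary complete graph $K$ whose vertex set is $\ca R$, so that $|V(K)|=|\ca R|=b$, and $2$-colour its edges: an edge $\{R_i,R_j\}$ gets colour \emph{red} if $R_i$ and $R_j$ are $\prec$-crossing, and colour \emph{blue} otherwise.

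The one thing that needs checking is that this is a meaningful colouring, i.e.\ that ``blue'' really means ``$\prec$-separated''. This is precisely \Cref{lem:crossepar}: every two distinct paths of $\ca R$ are $\prec$-crossing or $\prec$-separated, and these two possibilities are exclusive, since $R_i\prec R_j$ (all vertices of $R_i$ before all vertices of $R_j$ in $\prec$) leaves no room for an edge of $R_i$ to cross an edge of $R_j$. Hence a blue edge $\{R_i,R_j\}$ genuinely witnesses that $R_i$ and $R_j$ are $\prec$-separated, and the $2$-colouring is legitimate.

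Now I would simply take $b$ large enough, namely $b\ge R(d,c)$ from \Cref{ramsey}. By Ramsey's theorem $K$ then contains a red clique on $d$ vertices, which is a family of $d$ pairwise $\prec$-crossing paths, or a blue clique on $c$ vertices, which is a family of $c$ pairwise $\prec$-separated paths --- exactly the claimed alternative. So the corollary holds with the explicit threshold $b\ge R(d,c)$.

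I do not expect a genuine obstacle: the substance of the corollary is entirely carried by \Cref{lem:crossepar}, and the rest is a textbook invocation of Ramsey. The only points requiring minor attention are matching the roles of the two colours with the parameters $c,d$ in the statement of \Cref{ramsey} (red $\leftrightarrow$ crossing $\leftrightarrow$ $d$, blue $\leftrightarrow$ separated $\leftrightarrow$ $c$), and recording that the crossing/separated dichotomy of \Cref{lem:crossepar} is what makes the colouring well defined in the first place.
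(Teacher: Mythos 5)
Your proposal is correct and matches the paper's own proof essentially verbatim: both colour the pairs of paths by the crossing/separated dichotomy of \Cref{lem:crossepar} and invoke Ramsey's theorem on the resulting $2$-coloured complete graph on $\ca R$. Your care in matching the colour roles to the parameters (writing $R(d,c)$ so that the red/crossing clique has size $d$) is if anything slightly more precise than the paper's $R(c,d)$, but since the dichotomy and the Ramsey application are identical, there is nothing further to add.
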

\begin{proof}
Imagine a pair of paths $\{R_i,R_j\}\subseteq\ca R$ coloured red if $R_i,R_j$ are $\prec$-crossing, and blue if they are $\prec$-separated.
With respect to \Cref{lem:crossepar}, we apply \Cref{ramsey} with~$b\geq R(c,d)$.
\end{proof}

We finish as follows.
\begin{proof}[Proof of~\Cref{thm:main}]
Respecting the above definition of the set of paths $\ca R$ in $G$, we branch into the two cases determined by \Cref{cor:crossepar}.
\smallskip
\begin{description}
\item[Case~I.]
 There are $c$ pairwise $\prec$-separated paths in $\mathcal{R}$. 

Without loss of generality, let these paths be $R_1 \prec \ldots \prec R_c$.
For the root $t$ of $S_b$, label the $n$ vertices of the set $\{t\}\times V(Q)\subseteq V(X)$ by $t_1\prec\ldots\prec t_n$. 
There are two subcases.
\begin{itemize}
     \item  $R_{\lfloor c/2 \rfloor} \prec t_{\lceil n/2 \rceil}$. 
For each $i=1,\ldots,\min(\lfloor c/2\rfloor,\lceil n/2\rceil)$, pick an edge of $X$ from $t_{\lceil n/2\rceil+i-1}$ to $V(R_i)$ 
(which exist since $R_i$ hits every copy of $S_b$ in~$X$ by the definition).
We have got $\min(\lfloor c/2\rfloor,\lceil n/2\rceil)$ edges in $X$ that pairwise $\prec$-cross, as in \Cref{fig:separat}.
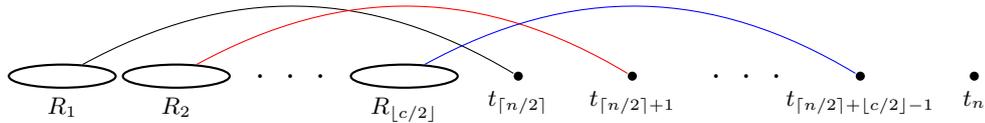
\begin{figure}[h!]
    \centering
\begin{tikzpicture}[every node/.style={thick}]\small
    \node [style=ellipse, minimum width=40pt, label=below:$R_1$, draw](A) at (0,0) {};
    \node [style=ellipse, minimum width=40pt, label=below:$R_2$, draw](B) at (1.5,0) {};
    \node (C) [ellipse, minimum width=100pt, color=white, text = black]at (3,0) {\LARGE . . .};
    \node [ellipse, minimum width=40pt, label=below:$R_{\lfloor c/2 \rfloor}$, draw](D) at (4.5,0) {};
    \node [style={circle,fill,inner sep=0pt, minimum size=1.2mm}, label=below:$t_{\lceil n/2 \rceil}$](A') at (6,0) { };
    \node [style={circle,fill,inner sep=0pt, minimum size=1.2mm}, label=below:$t_{\lceil n/2 \rceil + 1}$](B') at (7.5,0) { };
    \node [ style={circle,fill,inner sep=0pt, minimum size=1.2mm}, color = white, text = black](C') at (9,0) {\LARGE . . .};
    \node [style={circle,fill,inner sep=0pt, minimum size=1.2mm}, label=below:$t_{\lceil n/2 \rceil+\lfloor c/2 \rfloor-1}$](D') at (10.5,0) { };
    \node [style={circle,fill,inner sep=0pt, minimum size=1.2mm}, label=below:$t_n$](D'') at (12,0) { };
    \path (A) edge[bend left=30] (A'); 
    \path (B) edge[bend left=30, color=red] (B'); 
    \path (D) edge[bend left=30, color=blue](D'); 
\end{tikzpicture}
    \caption{Case~I, where $R_{\lfloor c/2 \rfloor} \prec t_{\lceil n/2 \rceil}$ and $\lceil n/2\rceil>\lfloor c/2\rfloor$.}
\label{fig:separat}
\end{figure} 
     \item  $t_{\lceil n/2 \rceil} \prec R_{\lfloor c/2 \rfloor + 1}$ (note that $t_{\lceil n/2\rceil}$ may be ``$\prec$-nested'' in  $R_{\lfloor c/2 \rfloor}$). 
This is symmetric to the previous, and we get $\min(\lceil c/2 \rceil, \lceil n/2 \rceil)$ pairwise $\prec$-crossing edges in $X$ 
between vertices of $R_{\lfloor c/2 \rfloor + 1},\ldots, R_c$ and $t_1,\ldots, t_{\lceil n/2 \rceil}$.
\end{itemize}
\smallskip
\item[Case~II.]
    There are $d$ pairwise $\prec$-crossing paths in $\mathcal{R}$.

Pick any path $R_0$ out of these $d$ paths. In $Z:=\bigcup_{R\in\ca R, R\not=R_0}E(R)$ there are at least $d-1$ edges 
which $\prec$-cross some edge of $R_0$, and so at least $(d-1)/n$ of them cross the same edge $e\in E(R_0)$.
Having $e=u_1u_2$, $u_1\prec u_2$, and $f=v_1v_2\in E(X)$ such that $e$ and $f$ $\prec$-cross, we say that $v_1$ is the {\em inside}
vertex of $f$ if $u_1\prec v_1\prec u_2$, and then $v_2$ is the {\em outside} vertex.
By the pigeonhole principle, there is a set $Z'\subseteq Z$ of $d'=|Z'|\geq(d-1)/n^2$ edges $\prec$-crossing $e$ such that their inside vertices
belong to the same copy of the substar~$S_b$~in~$X$.

The outside vertices of the edges of $Z'$ belong to at most two other copies of $S_b$ in~$X$ (determined by a neighbourhood in the path $Q$),
and each is before or after $e$ in~$\prec$.
By the pigeonhole principle again, and without loss of generality, there is a set $Z''\subseteq Z'$ of size $|Z''|\geq\frac12\cdot\frac12d'=d'/4$,
such that also the outside vertices of the edges of $Z''$ belong to the same copy of $S_b$ in~$X$, and they all lie after $e$ in~$\prec$.
See \Cref{fig:crossi}.
Moreover, by \Cref{cor:sameord} (the ordering claimed therein), the edges in $Z''$ must pairwise $\prec$-cross.

\begin{figure}[th]
    \centering
\begin{tikzpicture}[scale=0.85]\small
    \node (w) [style={circle,fill,inner sep=0pt, minimum size=0.12cm}, label=below:$u_1$] at (0, 0) { };
    \node (ww)[style={circle,fill,inner sep=0pt, minimum size=0.12cm}, label=below:$u_2$] at (6, 0) { };
    \node (x)[style={opacity=0, label}, label=below:$e$] at (0.5, 1) {};
    \path (w) edge[thick, bend left = 40, color=red] (ww);
    
    \node (S1)[style={circle,fill,inner sep=0pt, minimum size=0.12cm}, label=left:$t_1$] at (1.5,1.7) { };
    \node (A1)[style={circle,fill,inner sep=0pt, minimum size=0.12cm}] at (2,0) { };
    \node (B1)[style={circle,fill,inner sep=0pt, minimum size=0.12cm}] at (2.5,0) { };
    \node (C1)[style={circle,fill,inner sep=0pt, minimum size=0.12cm}] at (3,0) { };
    \node (D1)[style={circle,fill,inner sep=0pt, minimum size=0.12cm}] at (3.5,0) { };
    \node (E1)[style={circle,fill,inner sep=0pt, minimum size=0.12cm}] at (4,0) { };    
    \node (x)[style={opacity=0, label}, label=below:$Z''$] at (6.5, 1) {};
    \path (S1) edge[opacity=0.5] (A1); 
    \path (S1) edge[opacity=0.5] (B1); 
    \path (S1) edge[opacity=0.5] (C1);
    \path (S1) edge[opacity=0.5] (D1);
    \path (S1) edge[opacity=0.5] (E1);

    \node (S2)[style={circle,fill,inner sep=0pt, minimum size=0.12cm}, label=right:$t_2$] at (10.5,1.7) { };
    \draw[opacity=0.5] (S1)--(S2);
    \node (A2)[style={circle,fill,inner sep=0pt, minimum size=0.12cm}] at (8,0) { };
    \node (B2)[style={circle,fill,inner sep=0pt, minimum size=0.12cm}] at (8.5,0) { };
    \node (C2)[style={circle,fill,inner sep=0pt, minimum size=0.12cm}] at (9,0) { };
    \node (D2)[style={circle,fill,inner sep=0pt, minimum size=0.12cm}] at (9.5,0) { };
    \node (E2)[style={circle,fill,inner sep=0pt, minimum size=0.12cm}] at (10,0) { };    
    \path (S2) edge[opacity=0.5] (A2); 
    \path (S2) edge[opacity=0.5] (B2); 
    \path (S2) edge[opacity=0.5] (C2);
    \path (S2) edge[opacity=0.5] (D2);
    \path (S2) edge[opacity=0.5] (E2);

    \path (A1) edge[color=blue, bend left = 40, opacity=60] (A2);
    \path (B1) edge[color=blue, bend left = 40, opacity=60] (B2);
    \path (C1) edge[color=blue, bend left = 40, opacity=60] (C2);
    \path (D1) edge[color=blue, bend left = 40, opacity=60] (D2);
    \path (E1) edge[color=blue, bend left = 40, opacity=60] (E2);
\end{tikzpicture}
    \caption{Case II, with emphasized edge $e$, vertices $t_1,t_2$ being two copies of the root of~$S_b$,
	and the selected pairwise-crossing edges of $Z''$ in blue.}
    \label{fig:crossi}
\end{figure}
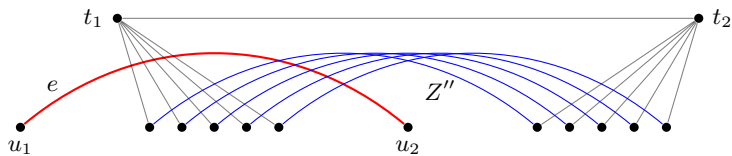
\end{description}

To finish the proof, we set $n=2s$ and $a = R(2s, 4n^2s+1)^{m}$ where $m=2^{n^2-1}$.
Then in \Cref{erdos_lemma} we get $b=R(2s, 4n^2s+1)$, and in \Cref{cor:crossepar} we have $c=2s$ and $d=4n^2s+1$.
In Case I, we then obtain at least $\min(\lfloor c/2\rfloor,\lceil n/2\rceil)=s$ edges of $X\subseteq G$ that pairwise $\prec$-cross.
In Case II, it is at least $d'/4=(d-1)/(4n^2)=s$ such pairwise $\prec$-crossing edges, too.
Edges that pairwise $\prec$-cross obviously must receive distinct colours.
A valid stack layout based on $\prec$ hence needs at least $s$ colours, and since $\prec$ has been arbitrary for the graph~$G$, 
we finally conclude that $\sn(G)\geq s$.
\end{proof}

\section{Conclusion}

We have provided a short elementary proof of \Cref{thm:main}.
Although the original proof in \cite{dujmovic2021stack} is not very long or difficult, by carefully rearranging the arguments
we have succeeded in eliminating some technical steps of the proof in \cite{dujmovic2021stack}
and, in particular, resolved the case of pairwise crossing paths in a direct short way.
Briefly explaining, our proof skips initial technical parts of \cite{dujmovic2021stack} preceding the use of
\Cref{erdos-szekeres} (Erd\H{o}s--Szekeres) and readily applies \Cref{erdos-szekeres} and \Cref{hex_lemma} in a way similar to \cite{dujmovic2021stack}. Then, instead of proving the existence of a set of pairwise separated paths in $G$, we conclude by \Cref{ramsey} (Ramsey) in which both outcomes straightforwardly lead to a large set of pairwise crossing edges,
thus avoiding other technical steps needed in \cite{dujmovic2021stack} mainly at the end of the arguments.



\bibliography{stackqueue}

\end{document}